\documentclass[12pt]{article}

\textheight 205mm
\textwidth 135mm

\usepackage[reqno,tbtags]{amsmath}
\usepackage[latin1]{inputenc}
\usepackage[english]{babel}
\usepackage[colorlinks]{hyperref}
\usepackage{mathtools}
\usepackage{amssymb}
\usepackage{amsthm}
\usepackage{mathrsfs}
\usepackage{enumerate}

\DeclarePairedDelimiter{\abs}{\lvert}{\rvert}
\DeclarePairedDelimiter{\norm}{\lVert}{\rVert}
\DeclarePairedDelimiter{\inner}{\langle}{\rangle}

\newcommand{\inv}{^{-1}}
\newcommand{\trans}{\mathcal{L}}
\newcommand{\pret}{\mathscr{L}}
\newcommand{\cros}{C(X)\rtimes_{\alpha,\trans}\N}
\newcommand{\N}{\mathbb{N}}
\newcommand{\Z}{\mathbb{Z}}
\newcommand{\T}{\mathbb{T}}
\newcommand{\cs}{C^*}

\newcommand{\hilbert}{\mathsf{H}}

\DeclareMathOperator{\supp}{supp}
\DeclareMathOperator{\ind}{ind}

\newtheorem{theorem}{Theorem}
\newtheorem{lemma}[theorem]{Lemma}
\newtheorem{proposition}[theorem]{Proposition}



\setlength{\topmargin}{0pt}

\title{{\bf On the Exel crossed product
of topological covering maps} }

\author{{\small {\sc Toke Meier Carlsen}
\footnote{Present address: Department of Mathematical Sciences,
The Norwegian University of Science and Technology (NTNU),
NO-7491 Trondheim, Norway. \newline e-mail: tokemeie@math.ntnu.no}} \\
{\small {\it Department of Mathematics \& Computer Science,}}\\
{\small {\it  University of Southern Denmark,}}\\
{\small {\it Campusvej 55, DK-5230 Odense M, Denmark}} \\*[0.5cm]
{\small {\sc Sergei Silvestrov}} \\
{\small {\it Centre for Mathematical Sciences, Lund university,}}\\
{\small {\it Box 118, SE-22100 Lund, Sweden.} }\\
{\small {\it e-mail: sergei.silvestrov@math.lth.se } } }

\date{January 10, 2008}

\begin{document}

\maketitle


\newpage

\begin{abstract}
\noindent For dynamical systems defined by a
covering map of a compact Hausdorff space and the
corresponding transfer operator, the associated
crossed product $C^*$-algebras $\cros$
introduced by Exel and Vershik are considered. An important
property for homeomorphism dynamical systems is
topological freeness. It can be extended in a
natural way to in general non-invertible
dynamical systems generated by covering maps. In
this article, it is shown that the following four
properties are equivalent: the dynamical system
generated by a covering map is topologically
free; the canonical imbedding of $C(X)$ into
$\cros$ is a maximal abelian $C^*$-subalgebra of
$\cros$; any nontrivial two sided ideal of
$\cros$ has non-zero intersection with the
imbedded copy of $C(X)$; a certain natural
representation of $\cros$ is faithful. This
result is a generalization to non-invertible
dynamics of the corresponding results for crossed
product $C^*$-algebras of homeomorphism dynamical
systems.
\end{abstract}

\footnote{ \\ \noindent {\it Mathematics Subject
Classification 2000}: {\rm Primary 47L65;
Secondary 46L55,
47L40, 54H20, 37B05, 54H15} \\
 \noindent
{\it Keywords}: crossed product algebra, covering
map, topologically free dynamical system, maximal
abelian subalgebra, ideals
\\
 \noindent
 This work was supported by the Swedish
Foundation for International Cooperation in
Research and Higher Education (STINT), the Swedish Research Council, the
Crafoord Foundation, the Royal Swedish Academy of Sciences and
the Royal Physiographic Society in Lund. The first named author
was also supported by the Danish Natural Science Research Council.}

\section{Introduction}
A dynamical system generated by a homeomorphism
of a compact Hausdorff topological space leads to
crossed product $C^*$-algebras of continuous
functions on the space by the action of the
additive group of integers via composition of
continuous functions with the iterations of
the generating homeomorphism. The interplay between
topological properties of the dynamical system
(or more general actions of groups) such as
minimality, transitivity, freeness and others on
the one hand, and properties of ideals,
subalgebras and representations of the
corresponding crossed product $C^*$-algebra on
the other hand have been a subject of intensive
investigations at least since the 1960's. This
interplay and its implications for operator
representations of the corresponding crossed
product algebras, spectral and harmonic analysis
and non-commutative analysis and non-commutative
geometry are fundamental for the mathematical
foundations of quantum mechanics, quantum field
theory, string theory, integrable systems,
lattice models, quantization, symmetry analysis
and, as it has become
clear recently, in wavelet analysis and its
applications in signal and image processing (see
\cite{BratJorgbook,Davidson,Jorg-b-1,
MACbook1,MACbook2,MACbook3,
OstSam-book,Ped-book,Williamsbook} and references
therein).

In the works of Zeller-Meier \cite{Zeller-Meier},
Effros, Hahn \cite{EffrosHahn}, Elliott
\cite{Elliott}, Archbold, Quigg, Spielberg
\cite{ArchbordSpielberg,QuiggSpielberg,Spielberg},
Kishimoto, Kawamura, Tomiyama
\cite{KawamuraTomiama,Kishimoto,Tomiama:SeoulLN}
it was observed that the property of topological
freeness of the dynamics for a homeomorphism, or
for more general actions of groups, is closely
linked with the structure of the ideals in the
corresponding crossed product $C^*$-algebra and
in particular with the existence of non-zero
intersections between ideals and the algebra of
continuous functions imbedded as a $C^*$-subalgebra
into the crossed product $C^*$-algebra. The
property of topological freeness has also
been observed to be equivalent or closely
linked to the position of the algebra of
continuous functions inside the crossed product,
namely with wether it is a maximal abelian
subalgebra or not.  (For recent developments in
this direction for reversible dynamical systems
see also \cite{SSD1,SSD2,SSD3,SvT}.) This
interplay has been considered both for the
universal crossed product $C^*$-algebra and for
the reduced crossed product $C^*$-algebra, the
later providing one of the important insights
into the significance of those properties for
representations of the crossed product.

One of the basic motivating points for this paper
is the following pi\-vo\-tal theorem summarizing
results established in
\cite{ArchbordSpielberg,Elliott,KawamuraTomiama,
Kishimoto,Tomiama:SeoulLN,Zeller-Meier}. This
theorem concerns the dynamical systems generated
by a homeomorphism of a compact
Hausdorff topological space, and
establishes for such dynamical systems
equi\-valences between topological freeness and
properties of ideals and of the canonical
subalgebra of continuous functions inside the
crossed product $C^*$-algebra. It can be found
for example in the book by Tomiyama \cite[Theorem
5.4]{{Tomiama:SeoulLN}} in the following
convenient formulation.

\begin{theorem} \label{Thm:TomBequivThm54} The following three properties are
equivalent for a compact Hausdorff space $X$ and
a homeomorphism $\sigma$ of $X$:
\begin{itemize}
\item[\rm (i)] The non-periodic points of $(X, \sigma)$ are dense in $X$;
\item[\rm (ii)] Any non-zero closed ideal $I$
in the crossed product
 $C^*$-algebra \\
 $C(X) \rtimes_{\alpha} \mathbb{Z}$ satisfies $I \cap
 C(X) \neq
 \{0\}$;
\item[\rm (iii)] $C(X)$ is a maximal abelian $C^*$-subalgebra of $C(X)
\rtimes_{\alpha} \mathbb{Z}$.
\end{itemize}
\end{theorem}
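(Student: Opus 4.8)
The plan is to establish the three equivalences by proving the implications (i)$\Rightarrow$(ii) and (i)$\Rightarrow$(iii) together with their converses in contrapositive form, so that (i) serves as the hub. Throughout I would work inside a dense $*$-subalgebra of $C(X)\rtimes_\alpha\Z$ consisting of finite sums $\sum_n f_n u^n$, where $u$ is the canonical unitary with $ufu^*=f\circ\sigma^{-1}$, and I would exploit the faithful conditional expectation $E\colon C(X)\rtimes_\alpha\Z\to C(X)$, $E(\sum_n f_n u^n)=f_0$. I would also record at the outset the topological reformulation of (i): since a nonempty open subset of the compact Hausdorff space $X$ is locally compact Hausdorff, hence a Baire space, the failure of (i) forces some fixed-point set $\{x:\sigma^n(x)=x\}$ with $n\ge 1$ to have nonempty interior. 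This single observation feeds both converse implications.

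For (i)$\Rightarrow$(ii), the heart of the matter, I would prove the Key Lemma that under (i) the expectation $E$ maps every closed ideal into itself. Given $a\in I$ and $\varepsilon>0$, approximate $a$ in norm by a finite sum $a'=\sum_{|n|\le N}f_n u^n$ (using Ces\`aro means of its Fourier expansion). Choosing a non-periodic point $x_0$ near which $\abs{f_0}$ is close to $\norm{f_0}$, I would select a positive contraction $h\in C(X)$ supported so near $x_0$ that $\supp h$ is disjoint from $\sigma^n(\supp h)$ for $0<\abs{n}\le N$; this is possible precisely because $x_0$ has a finite orbit segment of distinct points, and it forces $hu^n h=0$ for those $n$, so that $hah$ lies within $\varepsilon$ of $f_0 h^2\in C(X)$. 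Iterating this localization and passing to the limit realizes $E(a)$ as a norm limit of elements $\sum_i h_i a h_i\in I$, whence $E(a)\in I$. Applying the lemma to $a=c^*c$ for a nonzero $c\in I$ and invoking faithfulness of $E$ gives $0\ne E(c^*c)\in I\cap C(X)$, which is (ii).

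For (i)$\Rightarrow$(iii) I would argue directly: if $b$ commutes with $C(X)$, then comparing Fourier coefficients in $bf=fb$ shows that the $n$-th coefficient $\widehat b(n)$ satisfies $\widehat b(n)(x)\,(f(\sigma^{-n}x)-f(x))=0$ for all $f\in C(X)$, so $\widehat b(n)$ is supported in $\{x:\sigma^n(x)=x\}$. Under (i) each such set with $n\ne 0$ has empty interior, and continuity forces $\widehat b(n)=0$; hence $b=\widehat b(0)\in C(X)$, giving maximal abelianness. The two converses are then handled by construction from the open, $\sigma$-invariant interior $W$ of a fixed-point set supplied by the Baire observation, on which $\sigma^n=\mathrm{id}$. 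For the contrapositive $\lnot$(i)$\Rightarrow\lnot$(iii), choosing a nonzero $g\in C(X)$ supported in $W$, the element $gu^n$ commutes with $C(X)$ yet lies outside $C(X)$, so $C(X)$ is not maximal abelian.

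For the contrapositive $\lnot$(i)$\Rightarrow\lnot$(ii), I would observe that $u^n$ is central in the ideal $C_0(W)\rtimes_\alpha\Z\trianglelefteq C(X)\rtimes_\alpha\Z$, pick $\lambda\in\T$ in its spectrum, and take $I$ to be the nonzero ideal generated by $u^n-\lambda$ inside $C_0(W)\rtimes_\alpha\Z$; since passing to the quotient $u^n=\lambda$ keeps $C_0(W)$ embedded, $I\cap C(X)\subseteq I\cap C_0(W)=\{0\}$, and an ideal of an ideal is an ideal of the whole algebra. I expect the Key Lemma in (i)$\Rightarrow$(ii) to be the main obstacle: the localization annihilating the off-diagonal terms and the limiting argument that lands $E(a)$ inside the \emph{closed} ideal require care, whereas the remaining implications reduce to the Fourier-coefficient bookkeeping and the explicit constructions above.
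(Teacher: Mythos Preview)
The paper does not prove Theorem~\ref{Thm:TomBequivThm54}; it is quoted from Tomiyama's lecture notes \cite[Theorem 5.4]{Tomiama:SeoulLN} as background motivating the main Theorem~\ref{th:equivCovMaps}, so there is no in-paper proof to compare against. Your outline follows the classical route and is largely sound, but two steps need tightening.

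In (i)$\Rightarrow$(ii), your Key Lemma $E(I)\subseteq I$ is true, yet your sketch does not establish it: the bump functions $h$ with $\supp h\cap\sigma^n(\supp h)=\emptyset$ for $0<|n|\le N$ must avoid periodic points of period $\le N$, so you cannot assemble them into a partition of unity on all of $X$, and the passage ``iterating this localization \dots\ realizes $E(a)$ as a norm limit of $\sum_i h_i a h_i$'' does not go through. The standard remedy is to prove only what (ii) requires: for $0\ne a\in I$ positive, one bump $h$ near a non-periodic point where $|E(a)|$ is close to its maximum gives $hah\in I$ within $\varepsilon$ of the nonzero function $E(a)h^2\in C(X)$; feeding this into the quotient map $A\to A/I$ shows that $C(X)\to A/I$ cannot be isometric if $I\ne0$, contradicting $I\cap C(X)=0$. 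In $\lnot$(i)$\Rightarrow\lnot$(ii), note that $u^n$ is not an element of the ideal $C_0(W)\rtimes_\alpha\Z$ but a \emph{central multiplier} of it (because $\sigma^{-n}$ fixes $W$ pointwise); with that correction your spectral construction is fine.

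For contrast, the paper's proof of the generalization (Theorem~\ref{th:equivCovMaps}) takes a different tack: (1)$\Rightarrow$(2) is imported from \cite{ExelVershikMR2195591}; (1)$\Rightarrow$(4) is proved not via Fourier coefficients but via matrix coefficients of the explicit faithful representation $\widetilde{\psi}$ on $\ell^2(X\times\Z)$ (Lemma~\ref{lemma:b is cont}); and both $\lnot$(1)$\Rightarrow\lnot$(3) and $\lnot$(1)$\Rightarrow\lnot$(4) are handled by exhibiting the single explicit element $fs^k(s^*)^l f$ (with $\supp f$ inside an open set on which $T^k=T^l$) that commutes with $C(X)$, lies outside $C(X)$, and satisfies $\psi(fs^k(s^*)^lf)=\psi(f(I_kI_l)^{-1/2}f)$, rather than by constructing an ideal via spectral theory as you do.
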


In \cite{SSD1,SSD2,SSD3,SvT}, extensions and
modifications of this result and the interplay
between dynamics and properties of the canonical
subalgebra and ideals have been investigated for
dynamical systems that are not topolo\-gically free
on more general spaces than Hausdorff topological
spaces, both in the context of algebraic crossed
product by $\mathbb{Z}$ and for the corresponding
Banach algebra and $C^*$-algebra crossed products
in the case of homeomorphism dynamical systems or
more general dynamical systems generated by
invertible map. Also in these works, this
interplay has been considered on the side of
representations as well as with respect to
duality in the crossed product algebras.

In the recent years, substantial efforts are
made in establishing broad interplay
between $C^*$-algebras and non-invertible
dynamical systems, actions of semigroups,
equivalence relations, (semi-)groupoids,
correspondences (see for example works by Exel,
Arzumanian, Vershik
\cite{ExelMR1966826,ExelMR2032486,ExelVershikMR2195591,ArumzVershik1,ArumzVershik2},
Deaconu \cite{DeaconuMR1233967}, Renault
\cite{Renault1,Renault2MR1770333,Renault3}, Adji, an Huef,
Laca, Nielsen, Raeburn
\cite{AdjiLacaNielsenRaeburn,anHuefRaeburnMR98k:46098},
Bratteli, Dutkay, Jorgensen, Evans
\cite{BratEvansJorg,BratJorgIFSAMSmemo,BratJorgbook,DutkayJorg1,DutkayJorg2,DutkayJorg3,DutkayJorg4,JorgWavSignFracbook},
Dai, Larson \cite{DaiLarsonMeAMS}, Kawamura,
Kajiwara, Watatani
\cite{KajiwaraWatataniComlDyn,Kawamura1,Watatani1},
Ostrovsky\u{\i}, Samo\u{\i}lenko
\cite{OstSam-book} Cuntz, Krieger
\cite{CuntzKr1}, Matsumoto \cite{Matsumoto1},
Eilers \cite{Eilers1}, Carlsen, Silvestrov
\cite{CarlsenSilv} and references therein).

The notion of topological freeness for dynamical systems generated by
a homeomorphism can in a natural way be extended to such in general
non-invertible dynamical systems. The main result of this paper is
\mbox{Theorem \ref{th:equivCovMaps},} extending Theorem  \ref{Thm:TomBequivThm54} to non-invertible dynamical systems
generated by covering maps on compact Hausdorff spaces and to the
corresponding crossed product $C^*$-algebras
$C(X)\rtimes_{\alpha,\mathcal{L}}\mathbb{N}$ introduced by Exel and
Vershik in \cite{ExelVershikMR2195591}.
We also add a fourth equivalent condition of faithfulness
of a certain specified explicitly representation
of $C(X)\rtimes_{\alpha,\mathcal{L}}\mathbb{N}$.
More precisely, in
Theorem~\ref{th:equivCovMaps}, we show that the following four properties are equivalent:
the dynamical system generated by a covering map is topologically
free; the canonical imbedding of $C(X)$ into
$C(X)\rtimes_{\alpha,\mathcal{L}}\mathbb{N}$ is a maximal abelian
$C^*$-subalgebra of $C(X)\rtimes_{\alpha,\mathcal{L}}\mathbb{N}$; any
nontrivial two sided ideal of
$C(X)\rtimes_{\alpha,\mathcal{L}}\mathbb{N}$ has a non-zero
intersection with the imbedded copy of $C(X)$; a certain natural
representation of $C(X)\rtimes_{\alpha,\mathcal{L}}\mathbb{N}$ is
faithful.
It should be notices that since
$C(X)\rtimes_{\alpha,\mathcal{L}}\mathbb{N}$ can be constructed
as a \emph{singly generated dynamical system} (cf. \cite{DeaconuMR1233967}, \cite{Renault2MR1770333} and \cite{ExelVershikMR2195591}), it follows from Katsura's work
in \cite{KatsuraMR2253144} and \cite{KatsuraMR2279267} (which applies to a much more
general class of dynamical systems) that the first and the third
condition above are equivalent.
Theorem \ref{th:equivCovMaps} also extends a theorem in \cite{ExelVershikMR2195591} where it was shown that for
covering maps, topological freeness of the dynamical system implies
that every non-zero ideal in the crossed product
$C(X)\rtimes_{\alpha,\mathcal{L}}\mathbb{N}$ has a nonzero intersection
with the imbedded copy of $C(X)$ (we use this result in our proof of
Theorem \ref{th:equivCovMaps}). In the course of our investigation preceding
Theorem  \ref{th:equivCovMaps} we construct two representations
of $C(X)\rtimes_{\alpha,\mathcal{L}}\mathbb{N}$,
one having zero intersection of the kernel with the imbedded copy of $C(X)$
and the other being faithful. The condition of faithfulness
of the first representation is then shown to be one of the four equivalent
conditions in Theorem~\ref{th:equivCovMaps}.

\section{Crossed product $C^*$-algebra for
dynamics of covering maps} Let $X$ be a compact
Hausdorff space and let $T:X\to X$ be a covering
map, i.e., $T$ is continuous and surjective and
there exists for every $x\in X$ an open
neighborhood $V$ of $x$ such that $T\inv(V)$ is a
disjoint union of open sets
$(U_\alpha)_{\alpha\in I}$ satisfying that $T$
restricted to each $U_\alpha$ is a homeomorphism
from $U_\alpha$ onto $V$.

If we let $\alpha$, $\pret$ and $\trans$ be the
maps from $C(X)$ to
$C(X)$ given by
\begin{equation*}
  \alpha(f)= f\circ T,
\end{equation*}
\begin{equation*}
  \pret(f)(x)=\sum_{y\in T\inv(x)}f(y),
\end{equation*}
and
\begin{equation*}
  \trans(f)=\pret(1_X)\inv\pret(f),
\end{equation*}
(that these maps are well defined and maps $C(X)$
into $C(X)$ is showed in
\cite{ExelVershikMR2195591}), then $\trans$ is a
transfer operator for $\alpha$.

We will as in \cite{ExelVershikMR2195591} denote
$\alpha(\pret(1_X))$ by $\ind(E)$, and we will
for every $k\ge 1$ let
\begin{equation*}
  I_k = \ind(E)\alpha(\ind(E))\dotsm \alpha^{k-1}(\ind(E)).
\end{equation*}



Since $\trans$ is a transfer operator for
$\alpha$, one can associate the $\cs$-algebra
$\cros$ to the dynamical system $(X,T)$ (here
$\cros$ is the crossed-product associated to the
triple $(C(X),\alpha,\trans)$ by Exel in
\cite{ExelMR2032486}). Exel and Vershik have in
\cite{ExelVershikMR2195591} studied this
$\cs$-algebra. Since $T$ is a covering map there
exists a finite open covering $\{V_i\}_{i=1}^t$
of $X$ such that the restriction of $T$ to each
$V_i$ is injective. Let $\{v_i\}_{i=1}^t$ be a
partition of unit subordinate to
$\{V_i\}_{i=1}^t$ and let
\begin{equation*}
u_i=(\alpha(\pret(1_X))v_i)^{1/2}.
\end{equation*}
Exel and Vershik have characterized $\cros$ by the following theorem:
\begin{theorem}[{\cite[Theorem 9.2]{ExelVershikMR2195591}}] \label{theorem:universal}
  The $\cs$-algebra $\cros$ is the universal $\cs$-algebra generated
  by a copy of $C(X)$ and an isometry $s$ subject to the relations
  \begin{enumerate}[\indent (1)]
  \item $sf=\alpha(f)s$,
  \item $s^*fs=\trans(f)$,
  \item $1=\sum_{i=1}^tu_iss^*u_i$,
  \end{enumerate}
  for all $f\in C(X)$.
\end{theorem}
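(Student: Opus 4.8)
The plan is to read this statement as a specialization of Exel's general crossed-product machinery to the concrete data of a covering map, so that the real content is to pin down \emph{which} single relation cuts Exel's Toeplitz-type algebra down to $\cros$. First I would recall that, by the construction in \cite{ExelMR2032486}, $\cros$ is by definition the quotient of the universal $\cs$-algebra $\mathcal{T}$ generated by a copy of $C(X)$ together with an element $s$ subject only to $sf=\alpha(f)s$ and $s^*fs=\trans(f)$, modulo the closed two-sided ideal generated by all redundancies. Since $\trans(1_X)=\pret(1_X)\inv\pret(1_X)=1$, relation (2) evaluated at $f=1_X$ already forces $s^*s=1$, so $s$ is automatically an isometry and (1)--(2) are precisely the defining relations of $\mathcal{T}$. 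Thus the theorem reduces to showing that the ideal generated by \emph{all} redundancies coincides with the ideal generated by the single element $1-q$, where $q=\sum_{i=1}^t u_i ss^* u_i$; equivalently, that relation (3) is exactly what is needed to impose every redundancy at once.

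The key computational step is to verify that $(1,q)$ is itself a redundancy, i.e.\ that $q\,bs=bs$ for all $b\in C(X)$. Using (1)--(2) in the form $s^*(u_ib)s=\trans(u_ib)$ and $s\,\trans(u_ib)=\alpha(\trans(u_ib))s$, one rewrites $q\,bs=\bigl(\sum_i u_i\,\alpha(\trans(u_ib))\bigr)s$, so the identity reduces to the pointwise claim $\sum_i u_i\,\alpha(\trans(u_ib))=b$. Here the covering structure enters decisively: expanding $\alpha(\trans(u_ib))(x)=\ind(E)(x)\inv\sum_{y\in T\inv(T(x))}u_i(y)b(y)$, substituting $u_i=(\alpha(\pret(1_X))v_i)^{1/2}$, and noting that $\ind(E)(y)=\ind(E)(x)$ whenever $T(y)=T(x)$, the whole expression collapses to $\sum_{y\in T\inv(T(x))}b(y)\sum_i\sqrt{v_i(x)v_i(y)}$. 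Because each $v_i$ is supported where $T$ is injective, one has $v_i(x)v_i(y)=0$ as soon as $T(x)=T(y)$ with $x\neq y$, so the cross terms vanish and $\sum_i v_i(x)=1$ leaves exactly $b(x)$. I expect this to be the main obstacle, since it is the one place where the geometry of the sheets of the covering and the index function $\ind(E)$ must be combined with care.

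With this redundancy established, the two ideal inclusions follow cleanly. On the one hand, $1-q$ is a redundancy by the previous step, so it lies in the redundancy ideal. On the other hand, if $(a,k)$ is any redundancy, so that $a\,bs=k\,bs$ for every $b\in C(X)$, then working in the quotient of $\mathcal{T}$ by $1-q$ (where $q=1$) and taking $b=u_i$ one computes $a=aq=\sum_i (au_is)s^*u_i=\sum_i (ku_is)s^*u_i=kq=k$, so $a-k$ already lies in the ideal generated by $1-q$. Hence the redundancy ideal is contained in, and therefore equal to, the ideal generated by $1-q$. Consequently $\cros=\mathcal{T}/(\text{redundancy ideal})$ is exactly the universal $\cs$-algebra generated by a copy of $C(X)$ and an isometry $s$ satisfying (1)--(3), which is the assertion of the theorem.
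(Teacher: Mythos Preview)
The paper itself does not prove this theorem; it is quoted with attribution as \cite[Theorem~9.2]{ExelVershikMR2195591} and then used as a black box, with no argument supplied here. There is therefore no proof in the present paper to compare your proposal against.

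For what it is worth, your sketch is a correct reconstruction of the Exel--Vershik argument. The two substantive steps you isolate --- first, that $(1,q)$ with $q=\sum_i u_iss^*u_i$ is a redundancy, via the pointwise identity $\sum_i u_i\,\alpha(\trans(u_ib))=b$ obtained from the observation that $v_i(x)v_i(y)=0$ whenever $T(x)=T(y)$ with $x\ne y$; and second, that modulo $1-q$ every redundancy $(a,k)$ collapses because $a=aq=\sum_i(au_is)s^*u_i=\sum_i(ku_is)s^*u_i=kq=k$ --- are precisely the content of the proof in \cite{ExelVershikMR2195591}. One small point you leave implicit is that Exel's definition of a redundancy also requires $k$ to lie in $\overline{C(X)ss^*C(X)}$, but for $q=\sum_i u_iss^*u_i$ this is immediate.
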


\section{Two representations of $\cros$}
\label{sec:tworeps}

Let $X$ be a compact Hausdorff space and let $T:X\to X$ be a
covering map. Let $\hilbert$ be a Hilbert space with an orthonormal
basis $(e_x)_{x\in X}$ indexed by $X$. For $f\in C(X)$, let $M_f$ be
the bounded operator on $\hilbert$ defined by
\begin{equation*}
  M_f(e_x)=f(x)e_x,\quad x\in X,
\end{equation*}
and let $S$ be the bounded operator on $\hilbert$ defined by
\begin{equation*}
  S(e_x)=(\pret(1_X)(x))^{-1/2}\sum_{y\in T\inv(\{x\})}e_y,\quad x\in X.
\end{equation*}
It is easy to check that we for all $k\ge 1$ and all $x\in X$ have
\begin{equation*}
  S^k(e_x)=\sum_{y\in (T^k)\inv(\{x\})}(I_k(y))^{-1/2}e_y,
\end{equation*}
and
\begin{equation*}
  (S^*)^k(e_x)=(I_k(x))^{-1/2}e_{T^k(x)}.
\end{equation*}

\begin{proposition} \label{prop:rep}
  Let $X$ be a compact Hausdorff space, let $T:X\to X$ be a
  covering map and let $\hilbert$, $M_f$ and $S$ be as above. Then
  there exists a representation $\psi$ of $\cros$ on $\hilbert$ such
  that $\psi(f)=M_f$ for every $f\in C(X)$ and $\psi(s)=S$.

  We furthermore have that $\ker(\psi)\cap C(X)=\{0\}$.
\end{proposition}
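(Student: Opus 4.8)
The plan is to appeal to the universal property of $\cros$ recorded in Theorem~\ref{theorem:universal}: it suffices to show that $S$ is an isometry and that the operators $M_f$, $f\in C(X)$, together with $S$ satisfy the three defining relations $(1)$--$(3)$ of that theorem, for then universality supplies a $*$-homomorphism $\psi$ of $\cros$ with $\psi(f)=M_f$ and $\psi(s)=S$. As preliminaries I would note that $f\mapsto M_f$ is a unital $*$-homomorphism of $C(X)$ (immediate from $M_fM_g=M_{fg}$, $M_f^*=M_{\bar f}$ and $M_{1_X}=I$), and then verify the isometry property. For the latter I would use the $k=1$ case of the displayed formula for $S^*$, namely $S^*(e_x)=(I_1(x))^{-1/2}e_{T(x)}$ with $I_1=\ind(E)=\alpha(\pret(1_X))$, so that $I_1(x)=\pret(1_X)(T(x))$, and compute $S^*S(e_x)$ on basis vectors; summing over $y\in T\inv(\{x\})$ and using $\pret(1_X)(x)=\#\,T\inv(\{x\})$ gives $S^*S(e_x)=e_x$.

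Relations $(1)$ and $(2)$ are then routine evaluations on the basis $(e_x)_{x\in X}$. For $(1)$ I would compare $SM_f(e_x)$ with $M_{\alpha(f)}S(e_x)$; since every $y\in T\inv(\{x\})$ satisfies $T(y)=x$, the factor $\alpha(f)(y)=f(T(y))=f(x)$ is constant over the sum and can be pulled out, giving the equality. For $(2)$ I would compute $S^*M_fS(e_x)$: applying $S$, then $M_f$, then $S^*$ collapses the sum over $T\inv(\{x\})$ back onto $e_x$ with coefficient $(\pret(1_X)(x))^{-1}\sum_{y\in T\inv(\{x\})}f(y)=\trans(f)(x)$, which is precisely $M_{\trans(f)}(e_x)$.

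Relation $(3)$ is the substantive point, and the step I expect to be the main obstacle. I would first compute $SS^*(e_x)=(\pret(1_X)(T(x)))^{-1}\sum_{y\in T\inv(\{T(x)\})}e_y$. Then, using that each $M_{u_i}$ is multiplication by the nonnegative function $u_i$ with $u_i^2=\alpha(\pret(1_X))v_i$, and that $u_i(x)u_i(y)=\pret(1_X)(T(x))\sqrt{v_i(x)v_i(y)}$ whenever $T(y)=T(x)$, the factors of $\pret(1_X)(T(x))$ cancel and I obtain
\[
  \sum_{i=1}^t M_{u_i}SS^*M_{u_i}(e_x)
  =\sum_{y\in T\inv(\{T(x)\})}\Bigl(\sum_{i=1}^t\sqrt{v_i(x)v_i(y)}\Bigr)e_y.
\]
It remains to show the inner coefficient equals $1$ for $y=x$ and $0$ otherwise. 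The diagonal term is $\sum_{i=1}^t v_i(x)=1$, since $\{v_i\}$ is a partition of unity. The vanishing of the off-diagonal terms is exactly where the covering structure enters: if $y\neq x$ but $T(y)=T(x)$, then $x$ and $y$ cannot both lie in a single $V_i$, because $T$ is injective on $V_i$; as $\supp v_i\subseteq V_i$, this forces $v_i(x)v_i(y)=0$ for every $i$, so the coefficient is $0$. Hence the sum is $e_x$ for all $x$, establishing $(3)$.

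Finally, for $\ker(\psi)\cap C(X)=\{0\}$ I would argue directly: if $f\in C(X)$ satisfies $\psi(f)=M_f=0$, then $f(x)e_x=M_f(e_x)=0$ for every $x\in X$, whence $f(x)=0$ for all $x$ and $f=0$. That is, because the orthonormal basis $(e_x)_{x\in X}$ is indexed by all of $X$, the representation $f\mapsto M_f$ of $C(X)$ is faithful, which yields the claim.
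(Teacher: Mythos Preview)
Your proposal is correct and follows exactly the same approach as the paper: invoke the universal property of Theorem~\ref{theorem:universal} after verifying the three relations on basis vectors, and deduce $\ker(\psi)\cap C(X)=\{0\}$ from the obvious faithfulness of $f\mapsto M_f$. The paper's proof simply declares relations $(1)$--$(3)$ ``straight forward to check'' without writing out the computations; your expansion of relation~$(3)$ via the coefficient $\sum_i\sqrt{v_i(x)v_i(y)}$ and the injectivity of $T$ on each $V_i$ is precisely the verification that underlies that claim.
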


\begin{proof}
  Let $f\in C(X)$. It is straight forward to check that
  $SM_f=M_{\alpha(f)}S$, that $S^*M_fS=M_{\trans(f)}$, and that
  $\sum_{i=1}^tM_{u_i}SS^*M_{u_i}=M_{1_X}$, so the existence of $\psi$
  follows from Theorem \ref{theorem:universal}.

  Let $f\in C(X)$, and assume that there is an $x\in X$ such that
  $f(x)\ne 0$. Then $M_fe_x\ne 0$, so $f\notin\ker(\psi)$, which
  proves that $\ker(\psi)\cap C(X)=\{0\}$.
\end{proof}
Let $\widetilde{\hilbert}$ be a Hilbert space
with an orthonormal basis $(e_{(x,n)})_{(x,n)\in
X\times\Z}$ indexed by $X\times\Z$. For $f\in
C(X)$, let $\widetilde{M}_f$ be the bounded
operator on $\widetilde{\hilbert}$ defined by
\begin{equation*}
  \widetilde{M}_f(e_{(x,n)})=f(x)e_{(x,n)},\quad (x,n)\in X\times\Z,
\end{equation*}
and let $\widetilde{S}$ be the bounded operator on
$\widetilde{\hilbert}$ defined by
\begin{equation*}
  \widetilde{S}(e_{(x,n)})=(\pret(1_X)(x))^{-1/2}\sum_{y\in
    T\inv(\{x\})}e_{(y,n+1)},\quad (x,n)\in X\times\Z.
\end{equation*}
It is easy to check that we for all $k\ge 1$, all $n\in\Z$ and all $x\in X$ have
\begin{equation*}
  \widetilde{S}^k(e_{(x,n)})=\sum_{y\in (T^k)\inv(\{x\})}(I_k(y))^{-1/2}e_{(y,n+k)},
\end{equation*}
and
\begin{equation*}
  (\widetilde{S}^*)^k(e_{(x,n)})=(I_k(x))^{-1/2}e_{(T^k(x),n-k)}.
\end{equation*}

\begin{proposition} \label{prop:faithful rep}
  Let $X$ be a compact Hausdorff space, let $T:X\to X$ be a
  covering map and let $\widetilde{\hilbert}$, $\widetilde{M}_f$ and
  $\widetilde{S}$ be as above. Then
  there exists a faithful representation $\widetilde{\psi}$ of $\cros$ on
  $\widetilde{\hilbert}$ such
  that $\widetilde{\psi}(f)=\widetilde{M}_f$ for every $f\in C(X)$ and
  $\widetilde{\psi}(s)=\widetilde{S}$.
\end{proposition}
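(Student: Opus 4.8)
The plan is to obtain $\widetilde\psi$ exactly as $\psi$ was obtained in Proposition \ref{prop:rep}, and then to prove faithfulness by making the gauge circle action spatially inner and running a gauge-invariant uniqueness argument. For existence I would check on the basis vectors $e_{(x,n)}$ that $\widetilde S\widetilde M_f = \widetilde M_{\alpha(f)}\widetilde S$, that $\widetilde S^*\widetilde M_f\widetilde S = \widetilde M_{\trans(f)}$, and that $\sum_{i=1}^t \widetilde M_{u_i}\widetilde S\widetilde S^*\widetilde M_{u_i} = \widetilde M_{1_X}$; these are the identities already verified in Proposition \ref{prop:rep}, now with the passive coordinate $n$ carried along by $\widetilde S$ and ignored by $\widetilde M_f$. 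Theorem \ref{theorem:universal} then supplies $\widetilde\psi$ with $\widetilde\psi(f) = \widetilde M_f$ and $\widetilde\psi(s) = \widetilde S$, and the argument of Proposition \ref{prop:rep} gives $\ker\widetilde\psi\cap C(X) = \{0\}$.

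Next I would introduce the gauge action. Since the three relations of Theorem \ref{theorem:universal} are invariant under $f\mapsto f$, $s\mapsto zs$ for $z\in\T$, universality yields an action $\gamma\colon\T\to\operatorname{Aut}(\cros)$ with $\gamma_z|_{C(X)} = \mathrm{id}$ and $\gamma_z(s) = zs$. The feature separating $\widetilde\psi$ from $\psi$ is that on $\widetilde{\hilbert}$ this action becomes inner: the diagonal unitaries $\widetilde U_z$ defined by $\widetilde U_z e_{(x,n)} = z^n e_{(x,n)}$ satisfy $\widetilde U_z\widetilde M_f\widetilde U_z^* = \widetilde M_f$ and $\widetilde U_z\widetilde S\widetilde U_z^* = z\widetilde S$, so that $\widetilde\psi\circ\gamma_z = \operatorname{Ad}(\widetilde U_z)\circ\widetilde\psi$ for all $z$. (On $\hilbert$ no such diagonal unitary exists consistently along strings of preimages, which is precisely why $\psi$ can fail to be faithful.) I would then invoke the canonical faithful conditional expectation $E = \int_\T\gamma_z\,dz$ from $\cros$ onto its fixed-point algebra $\mathcal F = \cros^{\gamma}$, available from Exel's analysis of these crossed products, together with its spatial analogue $\widetilde E(b) = \int_\T\widetilde U_z b\widetilde U_z^*\,dz$; the intertwining above gives $\widetilde\psi\circ E = \widetilde E\circ\widetilde\psi$. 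Consequently, if $\widetilde\psi(a) = 0$ then $\widetilde\psi(E(a^*a)) = \widetilde E(\widetilde\psi(a^*a)) = 0$, so faithfulness of $\widetilde\psi$ reduces to injectivity of $\widetilde\psi$ on $\mathcal F$: that injectivity forces $E(a^*a) = 0$, and faithfulness of $E$ then yields $a = 0$.

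The main obstacle is thus injectivity of $\widetilde\psi$ on the fixed-point algebra. Here I would use that every degree-zero word in $s$, $s^*$ and $C(X)$ (such as $s^{*m}fs^m$ or $s^mgs^{*m}$) has equally many factors $\widetilde S$ and $\widetilde S^*$, hence preserves each subspace $\hilbert_n = \overline{\operatorname{span}}\{e_{(x,n)}: x\in X\}$ and acts there exactly as the corresponding operator of $\psi$ under the identification $e_{(x,n)}\leftrightarrow e_x$; therefore $\widetilde\psi|_{\mathcal F}$ is unitarily equivalent to $\bigoplus_{n\in\Z}\psi|_{\mathcal F}$, and it is enough to show that $\psi$ is injective on $\mathcal F$. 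Writing $\mathcal F$ as the closure of the increasing union of the subalgebras $\mathcal F_N$ generated by $C(X)$ and the degree-zero words with $s$-powers at most $N$, each $\mathcal F_N$ is represented fibrewise over the fibres of $T^N$ on the genuinely orthonormal vectors $(e_y)_{y\in(T^N)\inv(x)}$; since $\psi$ is faithful on $C(X)$ and hence separates the points of each fibre, and the fibre matrix units are realised faithfully by this orthogonality, $\psi|_{\mathcal F_N}$ is injective, hence isometric, and isometry passes to the closure $\mathcal F$. This fibrewise injectivity is the step requiring the most care. Alternatively, since $\cros$ is a singly generated dynamical system, one may instead appeal to Katsura's gauge-invariant uniqueness theorem, whose hypotheses---injectivity on $C(X)$ together with a compatible gauge action---are exactly what the first two paragraphs establish for $\widetilde\psi$.
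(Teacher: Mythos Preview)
Your approach is correct and matches the paper's: verify the three relations of Theorem~\ref{theorem:universal} to obtain $\widetilde\psi$, implement the gauge action spatially via the diagonal unitaries $U_z e_{(x,n)} = z^n e_{(x,n)}$, check injectivity on $C(X)$, and conclude by a gauge-invariant uniqueness argument. The only difference is that the paper invokes \cite[Theorem~4.2]{ExelVershikMR2195591} directly for this last step---precisely the statement that a gauge-covariant representation of $\cros$ faithful on $C(X)$ is faithful---whereas you unpack that theorem via the conditional expectation and a fibrewise analysis of $\psi$ on the core (and your closing alternative of citing Katsura is equivalent in spirit to the paper's citation).
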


\begin{proof}
  Let $f\in C(X)$. It is straight forward to check that
  $\widetilde{S}\widetilde{M}_f=\widetilde{M}_{\alpha(f)}S$, that
  $\widetilde{S}^*\widetilde{M}_f\widetilde{S}=\widetilde{M}_{\trans(f)}$,
  and that $\sum_{i=1}^t \widetilde{M}_{u_i} \widetilde{S}
  \widetilde{S}^* \widetilde{M}_{u_i} = \widetilde{M}_{1_X}$, so the
  existence of $\widetilde{\psi}$ follows from Theorem \ref{theorem:universal}.

  Let $z\in\T$. Let $U_z$ be the bounded operator on
  $\widetilde{\hilbert}$ defined by
  \begin{equation*}
    U_ze_{(x,n)}=z^ne_{(x,n)},\quad (x,n)\in X\times\Z.
  \end{equation*}
  We then have that $U_z\widetilde{M}_fU_z^*=\widetilde{M}_f$ for
  every $f\in C(X)$, and that
  $U_z\widetilde{S}U_z^*=z\widetilde{S}$. Thus
  \begin{equation*}
    (D,z)\mapsto U_zDU_z^*,\quad D\in B(\widetilde{H}),\ z\in\T
  \end{equation*}
  is an action of the circle group on $B(\widetilde{\hilbert})$ such
  that $\widetilde{\psi}$ is covariant relative to this action and the
  gauge action on $\cros$.

  Let $f\in C(X)$, and assume that there is an $x\in X$ such that
  $f(x)\ne 0$. Then $\widetilde{M}_fe_{(x,0)}\ne 0$, so
  $f\notin\ker(\widetilde{\psi})$, which
  proves that $\widetilde{\psi}$ is faithful on $C(X)$. Thus it follows
  from \cite[Theorem 4.2]{ExelVershikMR2195591}
  that $\widetilde{\psi}$ is faithful.
\end{proof}

Recall from \cite[Theorem
8.9]{ExelVershikMR2195591} that there exists a
conditional expectation $G:\cros\to C(X)$ such
that $G(fs^k(s^*)^lg)=\delta_{k,l}fI\inv_ng$ for
$f,g\in C(X)$ and $k,l\in\N$, where $\delta$ is
the Kronecker symbol.

\begin{lemma} \label{lemma:b is cont}
  Let $X$ be a compact Hausdorff space, let $T:X\to X$ be a
  covering map and let $\widetilde{\psi}$ be the representation of Proposition
  \ref{prop:faithful rep}. Then we have for all $b\in\cros$:
  \begin{enumerate}[\indent (1)]
  \item $\inner{\widetilde{\psi}(b)e_{(x,n)},e_{(x,n)}}=
    G(b)(x)$ for all $(x,n)\in X\times\Z$.
  \item $b\in C(X)$ if and only if $\inner{
    \widetilde{\psi}(b)e_{(x,m)},e_{(y,n)}}=0$ for all
    $(x,m),(y,n)\in X\times\Z$ with $(x,m)\ne (y,n)$.
  \item If $(x,m),(y,n)\in X\times\Z$ and $\inner{
    \widetilde{\psi}(b)e_{(x,m)},e_{(y,n)}}\ne 0$, then there
    exist $k,l\in\N$ such that $k+m=l+n$ and such that
    $T^l(x)=T^k(y)$, and there exist an open neighbourhood $U$ of $x$
    and an open neighbourhood $V$ of $y$ such that if $x'\in U$,
    $y'\in V$ and $T^l(x')=T^k(y')$, then $\inner{
    \widetilde{\psi}(b)e_{(x',m)},e_{(y',n)}}\ne 0$.
  \end{enumerate}
\end{lemma}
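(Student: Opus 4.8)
The plan is to treat the three parts in turn, grounding everything in the explicit action of $\wt\psi$ on the basis $(e_{(x,n)})$ together with the conditional expectation $G$. Parts (1) and (2) are essentially formal; the real content is (3), and the device that makes it work is to convert an off-diagonal coefficient of $\wt\psi(b)$ into a \emph{diagonal} one of an auxiliary element, where (1) and the continuity of $G(a)\in C(X)$ apply. For (1), fix $(x,n)$: both $b\mapsto\inner{\wt\psi(b)e_{(x,n)},e_{(x,n)}}$ and $b\mapsto G(b)(x)$ are bounded linear functionals on $\cros$ (the former because $\wt\psi$ is isometric, the latter because $G$ is contractive and evaluation at $x$ is contractive), so it suffices to check the identity on the dense linear span of the monomials $fs^k(s^*)^lg$ (on which $G$ is prescribed). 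A direct computation using the displayed formulas for $\wt S^k$ and $(\wt S^*)^l$ gives $\wt\psi(fs^k(s^*)^lg)e_{(x,n)}=g(x)(I_l(x))^{-1/2}\sum_{w\in(T^k)\inv(\{T^l(x)\})}(I_k(w))^{-1/2}f(w)e_{(w,n+k-l)}$; pairing with $e_{(x,n)}$ forces $k=l$ and $w=x$, leaving the coefficient $(fI_k\inv g)(x)=G(fs^k(s^*)^lg)(x)$, which proves (1).

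Assertion (2) then follows at once. If $b\in C(X)$, then $\wt\psi(b)=\wt M_b$ is diagonal, so every off-diagonal coefficient vanishes. Conversely, if all off-diagonal coefficients of $\wt\psi(b)$ vanish, then $\wt\psi(b)$ is diagonal, and by (1) its diagonal agrees with that of $\wt M_{G(b)}=\wt\psi(G(b))$; hence $\wt\psi(b)=\wt\psi(G(b))$, and since $\wt\psi$ is faithful (so injective), $b=G(b)\in C(X)$.

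For (3), I first extract a matching pair. The same monomial computation shows that the $((x,m),(y,n))$-coefficient of $\wt\psi(fs^k(s^*)^lg)$ equals $g(x)(I_l(x))^{-1/2}(I_k(y))^{-1/2}f(y)$ when $k+m=l+n$ and $T^l(x)=T^k(y)$, and is $0$ otherwise. Thus if $T^l(x)\ne T^k(y)$ for \emph{every} $k,l\in\N$ with $k+m=l+n$, then $b\mapsto\inner{\wt\psi(b)e_{(x,m)},e_{(y,n)}}$ is a bounded functional vanishing on a dense subalgebra, hence annihilates $b$. So $\inner{\wt\psi(b)e_{(x,m)},e_{(y,n)}}\ne0$ forces the existence of $k,l\in\N$ with $k+m=l+n$ and $T^l(x)=T^k(y)=:z_0$. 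Now I localise. Since the covering map $T$, and therefore $T^l$ and $T^k$, are local homeomorphisms, I choose open sets $U_0\ni x$, $V_0\ni y$ and $W\ni z_0$ with $T^l\colon U_0\to W$ and $T^k\colon V_0\to W$ homeomorphisms, and pick $\chi,\eta\in C(X)$ with $\chi(x)=\eta(y)=1$, $\supp\chi\subseteq U_0$, $\supp\eta\subseteq V_0$. Set $a:=(s^*)^k\eta\, b\,\chi s^l\in\cros$ (a gauge-degree-zero element). For $z'\in W$ let $x'(z')\in U_0$, $y'(z')\in V_0$ be the unique preimages with $T^l(x'(z'))=z'=T^k(y'(z'))$; the support conditions kill every term of $\wt M_\chi\wt S^l e_{(z',m-l)}$ and $\wt M_{\bar\eta}\wt S^k e_{(z',m-l)}$ except those indexed by $x'(z')$ and $y'(z')$, so evaluating (1) at the index $m-l$ yields
\[
G(a)(z')=\chi(x'(z'))\,\eta(y'(z'))\,(I_l(x'(z')))^{-1/2}\,(I_k(y'(z')))^{-1/2}\,\inner{\wt\psi(b)e_{(x'(z'),m)},e_{(y'(z'),n)}} .
\]
At $z'=z_0$ this reads $G(a)(z_0)=(I_l(x))^{-1/2}(I_k(y))^{-1/2}\inner{\wt\psi(b)e_{(x,m)},e_{(y,n)}}\ne0$. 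Since $G(a)$ is a continuous function on $X$, it is nonzero on an open neighbourhood $W_0\subseteq W$ of $z_0$; setting $U:=(T^l|_{U_0})\inv(W_0)\ni x$ and $V:=(T^k|_{V_0})\inv(W_0)\ni y$, the displayed identity shows that for $x'\in U$, $y'\in V$ with $T^l(x')=T^k(y')\in W_0$ (so $x'=x'(z')$, $y'=y'(z')$) the coefficient $\inner{\wt\psi(b)e_{(x',m)},e_{(y',n)}}$ is nonzero, which is exactly (3).

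The main obstacle is (3). A single matrix coefficient of $\wt\psi(b)$ generically receives contributions from infinitely many monomials (all $l$ with $T^l(x)=T^{l+(n-m)}(y)$), so a naive argument by continuity of the coefficients of a finite monomial approximation fails: moving $(x,y)$ off the matching variety changes the pairing discontinuously in $\hilbert$, since the basis vectors $e_{(x',m)}$ do not vary continuously. The localisation trick circumvents this by cutting down with $\chi$ and $\eta$ supported on single homeomorphic sheets, collapsing every fibre sum to one surviving term and transferring the question to the diagonal coefficient $G(a)$, whose dependence on $z'$ is honest continuity of a function in $C(X)$. The points requiring care are that $a$ has gauge-degree zero so that (1) applies, and that the second indices bookkeep correctly; choosing the base index $m-l$ makes the target indices come out as $m$ and $n=m+(k-l)$, so no separate treatment of the sign of $n-m$ is needed.
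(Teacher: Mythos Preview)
Your arguments for parts (1) and (2) coincide with the paper's: both reduce (1) to monomials by density and boundedness, and both deduce (2) from (1) together with faithfulness of $\widetilde\psi$.

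For part (3), however, you take a genuinely different route. The paper argues by an $\varepsilon/3$ approximation: it fixes a finite family $F$ of monomials $fs^{k'}(s^*)^{l'}g$ within $\varepsilon/3$ of $b$, isolates the subfamily $F'$ of those monomials whose $(x,m),(y,n)$-coefficient is nonzero (forcing $k'+m=l'+n$ and $T^{l'}(x)=T^{k'}(y)$), picks $k=\max\{k':(f',k',l',g')\in F'\}$, $l=k+m-n$, and then shrinks to neighbourhoods on which each monomial coefficient varies by at most $\varepsilon/(3|F'|)$; a triangle-inequality chain finishes. Your approach instead converts the off-diagonal coefficient into a diagonal one by setting $a=(s^*)^k\eta\,b\,\chi s^l$ with bump functions $\chi,\eta$ supported on single $T^l$- and $T^k$-sheets, so that the fibre sums in $\widetilde M_\chi\widetilde S^l e_{(z',m-l)}$ and $\widetilde M_{\bar\eta}\widetilde S^k e_{(z',m-l)}$ collapse to one term each; then (1) identifies the coefficient with $G(a)(z')$, and continuity of $G(a)\in C(X)$ gives the open neighbourhood for free. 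This is cleaner: it avoids the bookkeeping of the finite family $F'$ and the choice of a maximal $k$, and it replaces the discontinuous dependence of $e_{(x',m)}$ on $x'$ (which you correctly flag as the obstruction to a naive approximation argument) by the honest continuity of a function in $C(X)$. The paper's method, on the other hand, is more hands-on and does not require introducing the auxiliary element $a$ or invoking (1) a second time. Both proofs rely on the same local-homeomorphism structure of $T^k$ and $T^l$; yours packages it once via the cut-downs $\chi,\eta$, while the paper distributes it across the members of $F'$.
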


\begin{proof}
  (1): Fix $(x,n)\in X\times\Z$. It follows from
  \cite[Proposition 2.3]{ExelVershikMR2195591} that the set
  of elements of $\cros$ which can be written as a finite sum of
  elements of the form $fs^k(s^*)^lg$ where $f,g\in C(X)$ and
  $k,l\in\N$, is dense in $\cros$. Since both $b\mapsto\inner{
  \widetilde{\psi}(b)e_{(x,n)},e_{(x,n)}}$ and $b\mapsto
  G(b)(x)$ are linear and continuous maps, it follows that it is
  enough to show that if $f,g\in C(X)$ and
  $k,l\in\N$, then $\inner{
  \widetilde{\psi}(fs^k(s^*)^lg)e_{(x,n)},e_{(x,n)}}=
  \delta_{k,l}f(x)I\inv_k(x)g(x)$, and it is straight forward to check
  that this is the case.

  (2): It is clear that if $f\in C(X)$, then
  $\inner{\widetilde{\psi}(f)e_{(x,m)},e_{(y,n)}}=0$ for all
  $(x,m),(y,n)\in X\times\Z$ with $(x,m)\ne (y,n)$.

  Let $b\in\cros$ and assume that $\inner{
  \widetilde{\psi}(b)e_{(x,m)},e_{(y,n)}}=0$ for all
  $(x,m),(y,n)\in X\times\Z$ with $(x,m)\ne (y,n)$. It then follows
  from what we have just shown that $\inner{
  \widetilde{\psi}(b)e_{(x,m)},e_{(y,n)}}=\inner{
  \widetilde{\psi}(G(b))e_{(x,m)},e_{(y,n)}}$ for all
  $(x,m),(y,n)\in X\times\Z$, and thus that $\widetilde{\psi}(b)=
  \widetilde{\psi}(G(b))$. Since $\widetilde{\psi}$ is faithful, it
  follows that $b=G(b)\in C(X)$.

  (3): Let $(x,m),(y,n)\in X\times\Z$ and assume that $\varepsilon:=
  \inner{\widetilde{\psi}(b)e_{(x,m)},e_{(y,n)}}\ne 0$. It
  follows from \cite[Proposition 2.3]{ExelVershikMR2195591} that there exists a
  finite subset $F$ of $C(X)\times\N\times\N\times C(X)$ such that
  \begin{equation*}
      \norm[\Big]{b-\sum_{(f,k,l,g)\in F}fs^k(s^*)^lg}< \varepsilon/3.
    \end{equation*}
    If $(f,k,l,g)\in F$, then we have
    \begin{equation*}
      \widetilde{M}_f\widetilde{S}^k (\widetilde{S}^*)^l\widetilde{M}_g
      e_{(x,m)}= \sum_{y'\in (T^k)\inv(T^l(\{x\}))} f(y')
      (I_k(y')I_l(y'))^{-1/2} g(x)e_{(y',m-l+k)},
    \end{equation*}
    so either do we have that $y\in (T^k)\inv(T^l(\{x\}))$ and
    $n=m-l+k$, and thus
    that $k+m=l+n$ and $T^l(x)=T^k(y)$, or we have $\inner{
    \widetilde{\psi}(fs^k(s^*)^lg)e_{(x,m)},e_{(y,n)}}=0$.

    Let $F':=\{(f,k,l,g)\in F\mid k+m=l+n\text{ and
    }T^l(x)=T^k(y)\}$. Since $\sum_{(f,k,l,g)\in F}\inner{
    \widetilde{\psi}(fs^k(s^*)^lg)e_{(x,m)},e_{(y,n)}}\ne 0$, it
    follows that $F'\ne\emptyset$. Let $r$ be
    the number of elements of $F'$, let
    $k:=\max\{k'\mid (f',k',l',g')\in F'\}$ and let $l:=k+m-n$. Choose
    for each $(f',k',l',g')\in F'$ an open neighbourhood
    $U_{(f',k',l',g')}$ of $x$ and an open neighbourhood
    $V_{(f',k',l',g')}$ of $y$ such that there for each $x'\in
    U_{(f',k',l',g')}$ exists a unique $y'\in V_{(f',k',l',g')}$ such
    that $T^{l'}(x')=T^{k'}(y')$, and such that
    \begin{equation*}
      \abs{f(y)(I_k(y)I_l(y))^{-1/2} g(x)-f(y')
      (I_k(y')I_l(y'))^{-1/2} g(x')}<\varepsilon/(3r)
    \end{equation*}
    for $x'\in U_{(f',k',l',g')}$ and $y'\in V_{(f',k',l',g')}$ with
    $T^{l'}(x')=T^{k'}(y')$.

    Let $U=\bigcap_{(f',k',l',g')\in F'}U_{(f',k',l',g')}$ and
    $V=\bigcap_{(f',k',l',g')\in F'}V_{(f',k',l',g')}$. Then $U$ is an
    open neighbourhood of $x$, $V$ is an open neighbourhood of $y$,
    and if $x'\in U$, $y'\in V$ and $T^l(x')=T^k(y')$, then we have
    \begin{alignat*}{2}
      \abs{\inner{\widetilde{\psi}(b)e_{(x',m)},e_{(y',n)}}} &\ge&
      \varepsilon &-
      \abs{\inner{\widetilde{\psi}(b)e_{(x,m)},e_{(y,n)}} -
        \inner{\widetilde{\psi}(b)e_{(x',m)},e_{(y',n)}}}
        \end{alignat*}
    \begin{alignat*}{2}
      &\ge&
      \varepsilon &-
      \abs[\Big]{\inner{\widetilde{\psi}(b)e_{(x,m)},e_{(y,n)}} -
        \inner[\Big]{\sum_{(f,k,l,g)\in F}\widetilde{M}_f \widetilde{S}^k
          (\widetilde{S}^*)^l
          \widetilde{M}_ge_{(x,m)},e_{(y,n)}}} \\
      &&&- \abs[\Big]{\inner[\Big]{\sum_{(f,k,l,g)\in
            F}\widetilde{M}_f \widetilde{S}^k
          (\widetilde{S}^*)^l \widetilde{M}_ge_{(x,m)},e_{(y,n)}}\\
        &&&\qquad - \inner[\Big]{\sum_{(f,k,l,g)\in F}\widetilde{M}_f
          \widetilde{S}^k (\widetilde{S}^*)^l
          \widetilde{M}_ge_{(x',m)},e_{(y',n)}}}\\
      &&&- \Bigl\lvert
        \inner[\Big]{ \sum_{(f,k,l,g)\in F}
          \widetilde{M}_f \widetilde{S}^k
          (\widetilde{S}^*)^l \widetilde{M}_ge_{(x',m)},e_{(y',n)}}
       - \inner{\widetilde{\psi}(b)e_{(x',m)},e_{(y',n)}}
      \Bigr\rvert \\
      & >&
      \varepsilon &- \varepsilon/3
      - \sum_{(f,k,l,g)\in F'}\bigl\lvert f(y)(I_k(y)I_l(y))^{-1/2}g(x)\\
        &&&\qquad-f(y')(I_k(y')I_l(y'))^{-1/2}g(x')\bigr\rvert -\varepsilon/3
      >0.
        \end{alignat*}
\end{proof} %

\section{The main theorem}

Let $X$ be a compact Hausdorff space and let
$T:X\to X$ be a covering map. As in
\cite{ExelVershikMR2195591}, we say that $(X,T)$
is \emph{topological free} if for every pair of
nonnegative integers $(k,l)$ with $k\ne l$, the
set $\{x\in X\mid T^k(x)=T^l(x)\}$ has empty
interior. If the space $X$ is infinite, and we consider
dynamical systems generated by covering maps, then the
class of topologically free systems contains the
subclass of irreducible dynamical systems,
defined as follows (see \cite[Proposition 11.1]{ExelVershikMR2195591}). Two points $x,y\in X$ are
said to be {\em trajectory-equivalent} (see
e.g.~\cite{ArumzVershik1}) when there are
$n,m\in\N$ such that $T^n(x) = T^m(y)$.  We will
denote this by $x\sim y$. A subset $Y\subseteq X$
is said to be invariant if $x\sim y \in Y$
implies that $x\in Y$.  It is easy to see that
$Y$ is invariant if and only if\/ $T\inv(Y)=Y$.
The covering map $T$ and the dynamical system it
generates  is said to be {\em irreducible} when
there is no closed (equivalently open) invariant
set other than $\emptyset$ and $X$ (see
e.g.~\cite{ArumzVershik1}). Notice that
irreducibility is weaker than the condition of
minimality defined in \cite{DeaconuMR1233967}. In
\cite{ExelVershikMR2195591}, it was shown that,
for dynamical systems generated by covering maps
of infinite spaces, irreducibility of the system
is equivalent to simplicity of $\cros$.

\begin{theorem} \label{th:equivCovMaps}
  Let $X$ be a compact Hausdorff space, and let $T:X\to X$ be a
  covering map. Then the following are equivalent:
  \begin{enumerate}[\indent (1)]
  \item $(X,T)$ is topological free.
  \item Every nontrivial ideal of $\cros$ has a nontrivial
    intersection with $C(X)$.
  \item The representation of Proposition \ref{prop:rep} is faithful.
  \item $C(X)$ is a maximal abelian $C^*$-subalgebra of $\cros$.
  \end{enumerate}
\end{theorem}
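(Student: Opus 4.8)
The plan is to prove the implications $(1)\Rightarrow(2)\Rightarrow(3)\Rightarrow(1)$ as a cycle and to establish $(1)\Leftrightarrow(4)$ separately, the two arguments for the failure of $(1)$ sharing a single explicit construction. The implication $(1)\Rightarrow(2)$ is already available: it is precisely the theorem of Exel and Vershik in \cite{ExelVershikMR2195591} recalled in the introduction, that topological freeness forces every non-zero ideal to meet $C(X)$. For $(2)\Rightarrow(3)$ I would observe that $\ker(\psi)$ is a closed two-sided ideal with $\ker(\psi)\cap C(X)=\{0\}$ by Proposition~\ref{prop:rep}; since $\psi(1_X)=\mathrm{id}$, the ideal $\ker(\psi)$ is proper, so were it non-zero it would be a non-trivial ideal with zero intersection with $C(X)$, contradicting $(2)$. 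Hence $\ker(\psi)=\{0\}$ and $\psi$ is faithful.

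The core of the cycle is $(3)\Rightarrow(1)$, which I would prove contrapositively, setting up the construction reused for $(4)\Rightarrow(1)$. Suppose $(X,T)$ is not topologically free, so there are $k>l\ge 0$ and a non-empty open $W$ with $T^k=T^l$ on $W$. As $T$ is a local homeomorphism it is an open map, so $V:=T^l(W)$ is open and non-empty, and writing $p:=k-l\ge 1$ one checks that $T^p(z)=z$ for all $z\in V$. Fix a non-zero $h\in C(X)$ with $\supp(h)\subseteq V$ and set $b:=hs^p$. Using the formulas for $S$ and $\widetilde{S}$ together with $T^p(y)=y$ for $y\in\supp(h)$, only the term $y=x$ survives in $\psi(hs^p)e_x$ and in $\widetilde{\psi}(hs^p)e_{(x,n)}$; concretely $\psi(hs^p)=M_{hI_p^{-1/2}}=\psi(hI_p^{-1/2})$ (note $I_p$ is a positive continuous function, so $hI_p^{-1/2}\in C(X)$), whereas $\widetilde{\psi}(hs^p)$ sends $e_{(x,n)}$ to $h(x)I_p(x)^{-1/2}e_{(x,n+p)}$. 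Thus $b-hI_p^{-1/2}\in\ker(\psi)$, while $\widetilde{\psi}(b-hI_p^{-1/2})$ has the non-zero coefficient $\inner{\widetilde{\psi}(b-hI_p^{-1/2})e_{(x,n)},e_{(x,n+p)}}=h(x)I_p(x)^{-1/2}$ for $x\in\supp(h)$; since $\widetilde{\psi}$ is faithful (Proposition~\ref{prop:faithful rep}), $b-hI_p^{-1/2}\neq 0$, so $\psi$ is not faithful.

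The same element $b=hs^p$ delivers $(4)\Rightarrow(1)$. Because $\widetilde{\psi}(hs^p)$ has non-zero matrix coefficients only between $e_{(x,n)}$ and $e_{(x,n+p)}$, it commutes with every $\widetilde{M}_g$, so $hs^p$ commutes with $C(X)$; but the coefficient with $n+p\neq n$ shows, via Lemma~\ref{lemma:b is cont}(2), that $hs^p\notin C(X)$. Hence the relative commutant of $C(X)$ strictly contains $C(X)$, so $C(X)$ is not maximal abelian, which is the contrapositive of $(4)\Rightarrow(1)$.

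It remains to prove $(1)\Rightarrow(4)$, which I expect to be the main obstacle and for which Lemma~\ref{lemma:b is cont} is decisive. Assuming topological freeness, let $b\in\cros$ commute with $C(X)$; by Lemma~\ref{lemma:b is cont}(2) it suffices to show that every $\inner{\widetilde{\psi}(b)e_{(x,m)},e_{(y,n)}}$ with $(x,m)\neq(y,n)$ vanishes. If such a coefficient $\varepsilon$ were non-zero, then commuting $\widetilde{\psi}(b)$ with $\widetilde{M}_g$ and comparing the $(x,m),(y,n)$ entries gives $g(x)\varepsilon=g(y)\varepsilon$ for all $g\in C(X)$, whence $x=y$ and therefore $m\neq n$. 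Lemma~\ref{lemma:b is cont}(3) then supplies $k,l$ with $k+m=l+n$ (so $k\neq l$, since $m\neq n$), the coincidence $T^l(x)=T^k(x)$, and open neighbourhoods $U,V$ of $x$ on which the coefficient persists for all admissible nearby pairs. Using that $T^k,T^l$ are local homeomorphisms, each $x'\in U$ has a unique $y'\in V$ with $T^l(x')=T^k(y')$; the coefficient at $(x',m),(y',n)$ is non-zero, so the same commutation argument gives $x'=y'$ and hence $T^l(x')=T^k(x')$ throughout a neighbourhood of $x$. This exhibits an open subset of $\{x:T^k(x)=T^l(x)\}$ with $k\neq l$, contradicting topological freeness; therefore $b\in C(X)$ and $C(X)$ is maximal abelian. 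The two delicate points are this passage from a single non-zero coefficient to an open set of coincidences, which hinges on combining the commutation relation with the openness statement in Lemma~\ref{lemma:b is cont}(3), and the reduction of non-freeness to an open set of $T^p$-periodic points that powers both explicit constructions.
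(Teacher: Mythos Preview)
Your proof is correct and follows the same overall architecture as the paper: $(1)\Rightarrow(2)$ via Exel--Vershik, $(2)\Rightarrow(3)$ because $\ker\psi$ is an ideal meeting $C(X)$ trivially, $(1)\Rightarrow(4)$ via Lemma~\ref{lemma:b is cont}, and the remaining implications by exhibiting an explicit witness when $(X,T)$ is not topologically free. The argument for $(1)\Rightarrow(4)$ is essentially identical to the paper's (the paper separates the cases $x\ne y$ and $x=y,\ m\ne n$ and then invokes Lemma~\ref{lemma:b is cont}(3) in the same way; your formulation via ``all nearby admissible pairs satisfy $x'=y'$, hence $T^k=T^l$ on a neighbourhood'' is just the contrapositive of the paper's ``there exist $x'\ne y'$ with $T^l(x')=T^k(y')$''). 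One small point worth making explicit: to ensure that every $x'\in U$ admits a unique $y'\in V$ with $T^l(x')=T^k(y')$ you must first shrink $U,V$ so that $T^k$ is injective on $V$ and $T^l(U)\subseteq T^k(V)$; this is harmless since $T^k,T^l$ are local homeomorphisms, but Lemma~\ref{lemma:b is cont}(3) alone does not provide it.

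The genuine difference is in the element you build when $(X,T)$ fails to be topologically free. The paper works directly with $fs^k(s^*)^lf$ for $f$ supported in an open subset of $\{T^k=T^l\}$ on which $T^k$ and $T^l$ are injective; verifying that this commutes with $C(X)$ requires the partition-of-unity identity $\sum_{j\in Z^l}u_js^l(s^*)^lu_j^*=1$ and a short algebraic computation. You instead first push forward to $V=T^l(W)$, observe $T^p=\mathrm{id}$ on $V$ with $p=k-l$, and use $hs^p$ with $\operatorname{supp} h\subseteq V$. Your reduction buys a real simplification: the matrix coefficients of $\psi(hs^p)$ and $\widetilde{\psi}(hs^p)$ are computed in one line, commutation with $C(X)$ is immediate (the operator is ``diagonal in $x$, shift in $n$''), and the comparison $\psi(hs^p)=\psi(hI_p^{-1/2})$ drops out without any appeal to the $u_j$'s. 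The paper's route avoids the extra step of passing from $\{T^k=T^l\}$ to an open set of $T^p$-fixed points, but at the cost of the partition-of-unity calculation; your route is shorter and more transparent.
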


\begin{proof}
  (1)$\implies$(2): That (1) implies (2) is proven in \cite[Theorem
  10.3]{ExelVershikMR2195591}.

  (2)$\implies$(3): Assume that (2) holds and let $\psi$ be the
  representation of Proposition \ref{prop:rep}. It follows from Proposition
  \ref{prop:rep} that $\ker(\psi)\cap C(X)=\{0\}$, so
  $\ker(\psi)=\{0\}$. Thus (3) holds.

  (1)$\implies$(4): Assume that $(X,T)$ is topological free. Let
  $b\in\cros$ and assume that $bf=fb$ for all $f\in C(X)$. Let
  $\widetilde{\psi}$ be the representation of Proposition
  \ref{prop:faithful rep}. We want to show that $b\in C(X)$. It
  follows from Lemma \ref{lemma:b is cont}(2) that it is enough to show that
  $\inner{\widetilde{\psi}(b)e_{(x,m)}, e_{(y,n)}}=0$ for all
  $(x,m),(y,n)\in X\times\Z$ with $(x,m)\ne (y,n)$.

  Fix $(x,m),(y,n)\in X\times\Z$ with $(x,m)\ne (y,n)$. Assume first
  that $x\ne y$. Choose
  $f\in C(X)$ such that $f(x)\ne 0$ and $f(y)=0$. We then have that
  \begin{equation*}
    \begin{split}
      f(x)\inner{\widetilde{\psi}(b)e_{(x,m)}, e_{(y,n)}}&=
      \inner{\widetilde{\psi}(bf)e_{(x,m)}, e_{(y,n)}}=
      \inner{\widetilde{\psi}(fb)e_{(x,m)}, e_{(y,n)}}\\
      &= \inner{\widetilde{\psi}(b)e_{(x,m)},
      \widetilde{M}_{\overline{f}}e_{(y,n)}} =0,
    \end{split}
  \end{equation*}
  so $\inner{\widetilde{\psi}(b)e_{(x,m)}, e_{(y,n)}}=0$ as wanted.

  Assume then that $x=y$ and $m\ne n$. Assume that
  $\inner{\widetilde{\psi}(b)e_{(x,m)}, e_{(y,n)}}\ne 0$.
  It follows from Lemma \ref{lemma:b is cont}(3) that there exist
  $k,l\in\N$ such that $T^l(x)=T^k(x)$ and $k+m=l+n$, and two open
  neighbourhoods $U$ and $V$ of $x$
  such that if $x'\in U$, $y'\in V$ and $T^l(x')=T^k(y')$, then $\inner{
  \widetilde{\psi}(b)e_{(x',m)},e_{(y',n)}}\ne 0$. We must have
  that $k\ne l$, and since we are
  assuming that $(X,T)$ is topological free, it follows that there
  exist $x'\in U$ and $y'\in V$ such that $T^l(x')=T^k(y')$ and $x'\ne
  y'$. But it then follows from what we have just proved that $\inner{
  \widetilde{\psi}(b)e_{(x',m)},e_{(y',n)}}$ is both zero and
  non-zero, so we have a contradiction. Thus
  $\inner{\widetilde{\psi}(b)e_{(x,m)}, e_{(y,n)}}=0$ for all
  $(x,m),(y,n)\in X\times\Z$ with $(x,m)\ne (y,n)$. Hence $b\in C(X)$,
  which proves that (4) holds.

  (4)$\implies$(1) and (3)$\implies$(1): Assume that $(X,T)$ is not
  topological free. We will then show that the representation of Proposition \ref{prop:rep} is not
faithful, and that $C(X)$ is not maximal abelian. Since $(X,T)$ is not
  topological free, there exist $k\ne l$ such that $\{x\in X\mid T^k(x)=T^l(x)\}$
  has non-empty interior. Choose a non-empty open subset $U$ of
  $\{x\in X\mid T^k(x)=T^l(x)\}$ such that $T^k$ and $T^l$ are
  injective on $U$, and let $x_0\in U$. Choose an $f\in C(X)$ with $\supp
  f\subseteq U$ and $f(x_0)\ne 0$.

  Let $\{V_i\}_{i=1}^t$ be a finite open covering of $X$ such that the
  restriction of $T$ to each $V_i$ is injective, let $\{v_i\}_{i=1}^t$
  be a partition of unit subordinate to $\{V_i\}_{i=1}^t$, let
  $Z:=\{1,2,\dots,t\}$,  let $u_i:=(\alpha(\pret(1_X))v_i)^{1/2}$ for
  $i\in Z$, and let
  \begin{equation*}
    u_j:=u_{j_0}\alpha(u_{j_1})\alpha^2(u_{j_2})\dots\alpha^{l-1}(u_{j_{l-1}})
  \end{equation*}
  for $j=(j_0,j_1,j_2\dots,j_{l-1})\in Z^l$. It then follows from
  Proposition 7.4 and Proposition 8.2 of
  \cite{ExelVershikMR2195591} that $\sum_{j\in
    Z^l}u_js^l(s^*)^lu_j^*=1$.

  Let $h\in C(X)$. If $j\in Z^l$, then we have
  $f\alpha^k(\trans^l(hfu_j))=hf\alpha^k(\trans^l(fu_j))$. It follows
  that we have
  \begin{equation*}
    \begin{split}
      fs^k(s^*)^lfh&=fs^k(s^*)^lhf=fs^k(s^*)^lfh\sum_{j\in Z^l}u_js^l(s^*)^lu_j^*\\
      &=fs^k\sum_{j\in Z^l}\trans^l(fhu_j)(s^*)^lu_j^*=\sum_{j\in Z^l}f\alpha^k(\trans^l(hfu_j))s^k(s^*)^lu_j^*\\
      &=\sum_{j\in Z^l}hf\alpha^k(\trans^l(fu_j))s^k(s^*)^lu_j^*=hfs^k(s^*)^lf\sum_{j\in Z^l}u_js^l(s^*)^lu_j^*\\
      &=hfs^k(s^*)^lf.
    \end{split}
  \end{equation*}
  Thus $fs^k(s^*)^lf\in C(X)'$.

  Let $\widetilde{\psi}$ be the representation of Proposition
  \ref{prop:faithful rep}. It is easy to check that
  \begin{equation*}
    \inner{\widetilde{\psi}(fs^k(s^*)^lf)e_{(x_0,l)},
  e_{(x_0,k)}}=f(x_0)(I_k(x_0)I_l(x_0))^{-1/2}f(x_0)\ne 0,
  \end{equation*}
  and that $\inner{\widetilde{\psi}(h)e_{(x_0,l)},
  e_{(x_0,k)}}=0$ for all $h\in C(X)$. It follows that
  $fs^k(s^*)^lf\notin C(X)$.
  This shows that $C(X)'\setminus C(X)$ is non-empty,
  and thus that $C(X)$ is not a maximal abelian $C^*$-subalgebra of
  $\cros$. Hence $\neg (1)\implies\neg (4)$ or
  equivalently $(4)\implies (1)$.

  Let $\psi$ be the representation of Proposition \ref{prop:rep}. It
  is easy to check that we for all $x,y\in X$ have
  $\inner{\psi(fs^k(s^*)^lf)e_x,e_y}=
  \delta_{x,y}f(x)(I_k(x)I_l(x))^{-1/2}f(x)$, and thus that
  $\psi(fs^k(s^*)^lf)=\psi(f(I_kI_l)^{-1/2}f)$. We have already seen
  that $fs^k(s^*)^lf\notin C(X)$, and since $f(I_kI_l)^{-1/2}f\in
  C(X)$, it follows that $\psi$ is not faithful. Hence $\neg
  (1)\implies\neg (3)$ or equivalently $(3) \implies (1)$.
\end{proof}

\end{document}